\newtheorem{thm}{Theorem}
\newtheorem{lemma}{Lemma}
\newtheorem{rk}{Remark}
\numberwithin{equation}{section} \setcounter{tocdepth}{1}
\def\r{\rho}
\def\C{\mathbb C}
\def\C{\mathbb{C}}
\begin{document}

\title[$p$-adic dynamical systems of the function $a x^{-2}$]{$p$-adic dynamical systems of the function $a x^{-2}$}

\author{U.A. Rozikov}

 \address{U.\ A.\ Rozikov \begin{itemize}
 \item[] V.I.Romanovskiy Institute of Mathematics of Uzbek Academy of Sciences;
\item[] Faculty of Mathematics, National University of Uzbekistan,
4, University str., 100174, Tashkent, Uzbekistan and
\item[] AKFA University, 1st Deadlock 10, Kukcha Darvoza, 100095, Tashkent, Uzbekistan.
\end{itemize}
} \email
{rozikovu@yandex.ru}

\begin{abstract} In this paper we study $p$-adic dynamical systems generated by
the function $f(x)={a\over x^2}$ in the set of complex $p$-adic numbers.
We find an explicit formula for the $n$-fold composition of $f$ for any $n\geq 1$.
Using this formula we give fixed points, periodic points, basin of attraction
and Siegel disk  of each fixed (periodic) point depending on parameters $p$ and $a$.
\end{abstract}

\keywords{Dynamical systems; fixed point; periodic point; Siegel disk;
complex $p$-adic field.} \subjclass[2010]{37P05} \maketitle

\section{Introduction}

Nowadays the theory of $p$-adic numbers is one of very actively developing area in mathematics.
Because it has numerous applications in many branches of mathematics,
biology, physics and other sciences (see for example \cite{KJ}, \cite{Rpa}, \cite{VV} and the references therein).

In this paper we continue our study of $p$-adic dynamical systems generated by rational
functions (see \cite{ARS}-\cite{S} and references therein for motivations and history of $p$-adic dynamical systems).

Let us recall the main definition of the paper:

{\bf $p$-adic numbers}. Denote by
$(n,m)$ the greatest common
divisor of the positive integers $n$ and $m$.

Let $\mathbb{Q}$ be the field of rational numbers.

For each fixed prime number $p$, every rational number $x\neq 0$ can be represented in the
form $x=p^r\frac{n}{m}$, where $r,n\in\mathbb{Z}$, $m$ is a
positive integer, $(p,n)=1$, $(p,m)=1$.

The $p$-adic norm of $x$ is given by
$$
|x|_p=\left\{
\begin{array}{ll}
p^{-r}, & \ \textrm{ for $x\neq 0$},\\[2mm]
0, &\ \textrm{ for $x=0$}.\\
\end{array}
\right.
$$
It has the following properties:

1) $|x|_p\geq 0$ and $|x|_p=0$ if and only if $x=0$,

2) $|xy|_p=|x|_p|y|_p$,

3) the strong triangle inequality
$$
|x+y|_p\leq\max\{|x|_p,|y|_p\},
$$

3.1) if $|x|_p\neq |y|_p$ then $|x+y|_p=\max\{|x|_p,|y|_p\}$,

3.2) if $|x|_p=|y|_p$ then for $p=2$ we have
$|x+y|_p\leq {1\over 2}|x|_p$ (see \cite{VV}).

The completion of $\mathbb{Q}$ with  respect to $p$-adic norm defines the
$p$-adic field which is denoted by $\mathbb{Q}_p$ (see \cite{Ko}).

The algebraic completion of $\mathbb{Q}_p$ is denoted by $\C_p$ and it is
called the set of {\it complex $p$-adic numbers}.% Note that (see \cite{Ko}, \cite{R}, \cite{Sc})
%$\C_p$ is algebraically closed, an infinite dimensional vector space over $\mathbb{Q}_p$, and
%separable. The value group of $\C_p$ is $\{p^r: \, r\in \mathbb{Q}\}$. Any element of $\C_p$
% can be expressed as the product of a fractional power of $p$, a root of unity and
% an element of the unit disk around 1 in $\C_p$.

 For any $a\in\C_p$ and
$r>0$ denote
$$
U_r(a)=\{x\in\C_p : |x-a|_p<r\},\ \ V_r(a)=\{x\in\C_p :
|x-a|_p\leq r\},
$$
$$
S_r(a)=\{x\in\C_p : |x-a|_p= r\}.
$$

{\bf Dynamical systems in $\C_p$.}
To define a dynamical system we consider a function  $f: x\in U\to f(x)\in U$, (in this paper
 $U=U_r(a)$ or $\C_p$) (see for example \cite{PJS}).

For $x\in U$ denote
by $f^n(x)$ the $n$-fold composition of $f$ with itself (i.e. $n$ time iteration of $f$ to $x$):
$$f^n(x)=\underbrace{f(f(f\dots (f}_{n \,{\rm times}}(x)))\dots).$$

For arbitrary given $x_0\in U$ and $f:U\to U$ the discrete-time
dynamical system (also called the trajectory) of $x_0$ is the sequence of points
\begin{equation}\label{eds}
x_0, x_1=f(x_0), x_2=f^2(x_0), x_3=f^3(x_0), \dots
\end{equation}
{\it The main problem:} Given a function $f$ and initial point $x_0$ what ultimately happens with the sequence (\ref{eds}).
Does the limit $\lim_{n\to\infty} x_n$ exist? If not what is the set of limit points of the sequence?

A point $x\in U$ is called a fixed point for $f$ if $f(x)=x$. The point $x$
is a periodic point of period $m$ if $f^m(x) = x$. The least positive $m$ for which
$f^m(x) = x$ is called the prime period of $x$.

A fixed point $x_0$ is called an {\it
attractor} if there exists a neighborhood $U(x_0)$ of $x_0$ such
that for all points $x\in U(x_0)$ it holds
$\lim\limits_{n\to\infty}f^n(x)=x_0$. If $x_0$ is an attractor
then its {\it basin of attraction} is
$$
\mathcal A(x_0)=\{x\in \C_p :\ f^n(x)\to x_0, \ n\to\infty\}.
$$
A fixed point $x_0$ is called {\it repeller} if there  exists a
neighborhood $U(x_0)$ of $x_0$ such that $|f(x)-x_0|_p>|x-x_0|_p$
for $x\in U(x_0)$, $x\neq x_0$.

Let $x_0$ be a fixed point of a
function $f(x)$.
Put $\lambda=f'(x_0)$. The point $x_0$ is attractive if $0<|\lambda|_p < 1$, {\it indifferent} if $|\lambda|_p = 1$,
and repelling if $|\lambda|_p > 1$.

The ball $U_r(x_0)$ (contained in $V$) is said to
be a {\it Siegel disk} if each sphere $S_{\r}(x_0)$, $\r<r$ is an
invariant sphere of $f(x)$, i.e. if $x\in S_{\r}(x_0)$ then all
iterated points $f^n(x)\in S_{\r}(x_0)$ for all $n=1,2\dots$.  The
union of all Siegel desks with the center at $x_0$ is called {\it
a maximum Siegel disk} and is denoted by $SI(x_0)$.

In  Section 2 we consider the function $f(x)={a\over x^2}$ and study the dynamical systems generated by this function in $\C_p$.
We give fixed points, periodic points, basin of attraction and Siegel disk of each fixed (and periodic) point.

\section{The function $a/x^2$}

Consider the dynamical system associated with the function $f:\C_p\to\C_p$ defined by
\begin{equation}\label{fz}
f(x)=\frac{a}{x^2}, \ \ a\neq 0, \ \  a\in \C_p,
\end{equation}
where $x\ne 0$.

Denote by $\theta_{j,n}$, $j = 1,\dots, n$, the $n$th root of unity in $\C_p$, while $\theta_{1,n} = 1$.

This function has three  fixed point $x_k$, $k=1,2,3$,  which are solutions to
$x^3=a$ in $\C_p$.

For these fixed points we have
\begin{equation}\label{nor}
x^3_k=a \ \ \Rightarrow \ \ x_k=\theta_{k,3}a^{1\over 3} \ \ \Rightarrow \ \ |x^3_k|_p=|a|_p \ \ \Rightarrow \ \ |x_k|_p=\alpha\equiv(|a|_p)^{1/3}.
\end{equation}
Thus $x_k\in S_\alpha(0)$, $k=1,2,3$.

We have
$$f'(x)={-2a\over x^3}= {-2\over x}\cdot f(x).$$
Therefore at a fixed point we get
$$f'(x_k)={-2\over x_k}\cdot f(x_k)=-2.$$
$$|f'(x_k)|_p=\left\{\begin{array}{ll}
1/2, \ \ \mbox{if} \ \ p=2\\[2mm]
1, \ \ \mbox{if} \ \ p\geq 3
\end{array}\right.$$
Hence the fixed point $x_k$ is an attractive for $p=2$ and an indifferent for $p\geq 3$. Therefore the fixed point is never repeller.

We can explicitly calculate $f^n$:

\begin{lemma}\label{fo} For any $x\in \C_p\setminus \{0\}$ we have
$$f^n(x)=a^{{1\over 3}(1-(-2)^n)}\cdot x^{(-2)^n}, \ \ n\geq 1.$$
\end{lemma}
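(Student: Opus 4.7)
The natural approach is a straightforward induction on $n$, since the iteration step only requires substituting the previous iterate into $f(x) = a/x^2$ and collecting exponents.

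\textbf{Base case.} For $n=1$ the formula gives $a^{(1-(-2))/3}\cdot x^{-2} = a^{1}\cdot x^{-2} = f(x)$, which matches.

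\textbf{Inductive step.} Assuming $f^n(x) = a^{(1-(-2)^n)/3}\cdot x^{(-2)^n}$, I would compute
$$f^{n+1}(x) = \frac{a}{\bigl(f^n(x)\bigr)^2} = \frac{a}{a^{2(1-(-2)^n)/3}\cdot x^{2(-2)^n}} = a^{1 - 2(1-(-2)^n)/3}\cdot x^{-2\cdot(-2)^n},$$
and then simplify the exponent of $a$ via
$$1 - \frac{2(1-(-2)^n)}{3} = \frac{3 - 2 + 2(-2)^n}{3} = \frac{1 - (-2)^{n+1}}{3},$$
while the exponent of $x$ is $-2\cdot(-2)^n = (-2)^{n+1}$. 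This delivers exactly the claimed formula with $n$ replaced by $n+1$.

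\textbf{Where the only subtlety lies.} The expression involves a fractional power $a^{(1-(-2)^n)/3}$, which a priori would be ambiguous in $\C_p$. However, since $-2\equiv 1\pmod 3$, one has $(-2)^n \equiv 1 \pmod 3$ for every $n\geq 1$, so $(1-(-2)^n)/3$ is always an integer and the expression is unambiguous. This is the single point that needs to be noted; the rest of the argument is pure bookkeeping of exponents, and no $p$-adic analysis is required for the identity itself.
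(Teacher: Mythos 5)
Your proof is correct and follows essentially the same route as the paper: induction on $n$ with direct bookkeeping of the exponents (the paper writes $f^{n+1}=f^n\circ f$ where you write $f^{n+1}=f\circ f^n$, a trivial variation). Your added observation that $(-2)^n\equiv 1\pmod 3$ makes the exponent ${1\over 3}(1-(-2)^n)$ an integer is a worthwhile clarification the paper leaves implicit.
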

\begin{proof} We use induction over $n$.
For $n=1,2$ the formula is clear. Assume it is true for $n$ and show it for $n+1$:
$$f^{n+1}(x)=f^n(f(x))=a^{{1\over 3}(1-(-2)^n)}\cdot (f(x))^{(-2)^n}$$
$$= a^{{1\over 3}(1-(-2)^n)}\cdot ({a\over x^2})^{(-2)^n}=a^{{1\over 3}(1-(-2)^{n+1})}\cdot x^{(-2)^{n+1}}.$$
This completes the proof.
\end{proof}
Recall $\alpha=(|a|_p)^{1/3}$. For $r>0$, take $x\in S_r(0)$, i.e.,  $|x|_p=r$.
Then we have

\begin{equation}\label{na}
|f^n(x)|_p=\left|a^{{1\over 3}(1-(-2)^n)}\cdot x^{(-2)^n}\right|_p
=\alpha^{1-(-2)^n}\cdot r^{(-2)^n}, \ \ n\geq 1.
\end{equation}

\subsection{Dynamics on $\C_p\setminus S_\alpha(0)$}

\begin{lemma} For $\alpha$ defined in (\ref{nor}) the following assertions hold
\begin{itemize}
\item[1.] The sphere $S_\alpha(0)$ is invariant with respect to $f$, (i.e., $f(S_\alpha(0))\subset S_\alpha(0)$);
\item[2.] $f(U_\alpha(0))\subset \C_p\setminus V_\alpha(0)$;
\item[3.] $f(\C_p\setminus V_\alpha(0))\subset U_\alpha(0)$.
\end{itemize}
\end{lemma}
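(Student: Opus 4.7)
The plan is to reduce all three inclusions to a single elementary identity for the $p$-adic norm of $f(x)$. From the definition (\ref{fz}) together with multiplicativity of $|\cdot|_p$ and the relation $|a|_p=\alpha^3$ from (\ref{nor}), one has
\[
|f(x)|_p=\frac{|a|_p}{|x|_p^{\,2}}=\frac{\alpha^{3}}{|x|_p^{\,2}}\qquad\text{for every } x\in\C_p\setminus\{0\}.
\]
This is the one identity the whole lemma rests on, so I would prove it first and then use trichotomy of $|x|_p$ relative to $\alpha$.

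Each of the three claims then becomes a one-line comparison. For assertion 1, take $x\in S_\alpha(0)$, so $|x|_p=\alpha$; the displayed identity yields $|f(x)|_p=\alpha^3/\alpha^2=\alpha$, hence $f(x)\in S_\alpha(0)$. For assertion 2, take $x\in U_\alpha(0)\setminus\{0\}$, so $0<|x|_p<\alpha$; then $|f(x)|_p=\alpha^3/|x|_p^{\,2}>\alpha$, hence $f(x)\notin V_\alpha(0)$. For assertion 3, take $x\in\C_p\setminus V_\alpha(0)$, so $|x|_p>\alpha$; then $|f(x)|_p=\alpha^3/|x|_p^{\,2}<\alpha$, hence $f(x)\in U_\alpha(0)$.

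There is essentially no obstacle here: the lemma is an immediate consequence of the explicit shape of $f$ and the fact that every fixed point lies on $S_\alpha(0)$. The only minor point worth flagging is that $0\in U_\alpha(0)$ is outside the domain of $f$, so the inclusion in (2) is tacitly understood on $U_\alpha(0)\setminus\{0\}$; no $p$-adic subtlety (such as the $p=2$ refinement 3.2 of the strong triangle inequality) is needed because only multiplicativity of the norm is used.
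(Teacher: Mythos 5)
Your proof is correct and follows essentially the same route as the paper: both reduce each assertion to the identity $|f(x)|_p=|a|_p/|x|_p^2=\alpha^3/|x|_p^2$ and compare with $\alpha$ by trichotomy. Your remark that $x=0$ must be excluded from $U_\alpha(0)$ in assertion 2 is a small but legitimate point the paper glosses over.
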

\begin{proof}
1. If $x\in S_\alpha(0)$, i.e., $|x|_p=\alpha$, then
$$|f(x)|_p=|\frac{a}{x^2}|_p={|a|_p\over \alpha^2}=\alpha.$$

2. If $x\in U_\alpha(0)$, i.e., $|x|_p<\alpha$, then
$$|f(x)|_p=|\frac{a}{x^2}|_p>{|a|_p\over \alpha^2}=\alpha.$$
Therefore, $f(x)\in \C_p\setminus V_\alpha(0)$.
Proof of the part 3 is similar.
\end{proof}

\begin{lemma} The function (\ref{fz}) does not have any periodic point in $\C_p\setminus S_\alpha(0)$.
\end{lemma}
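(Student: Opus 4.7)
\medskip

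\noindent\textbf{Proof proposal.} The plan is to read off the contradiction directly from the norm formula (\ref{na}), without returning to the iteration itself. Suppose, toward a contradiction, that a periodic point $x$ of some prime period $m\geq 1$ exists in $\C_p\setminus S_\alpha(0)$. Since $f$ is undefined at $0$ and its image avoids $0$ (because $a\neq 0$), certainly $x\neq 0$; put $r=|x|_p$, so the hypothesis becomes $r\neq \alpha$.

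Applying $|\cdot|_p$ to the identity $f^m(x)=x$ and inserting (\ref{na}) gives
$$\alpha^{1-(-2)^m}\cdot r^{(-2)^m}=r,$$
which, after dividing by $r^{(-2)^m}$ (a strictly positive real), rearranges to
$$\left(\frac{\alpha}{r}\right)^{1-(-2)^m}=1.$$

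The remaining step is to notice that for every $m\geq 1$ the integer $(-2)^m\in\{-2,4,-8,16,\dots\}$ is never equal to $1$, so the exponent $1-(-2)^m$ is a nonzero integer. A positive real number raised to a nonzero integer power equals $1$ only when the base itself equals $1$, hence $\alpha=r$, contradicting $r\neq\alpha$.

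The main obstacle is essentially absent here: Lemma \ref{fo} and the derived identity (\ref{na}) have already done all of the substantive work, reducing the $p$-adic dynamical question to a trivial positive-real equation. The one micro-subtlety worth pointing out in the write-up is that we implicitly use $f^k(x)\neq 0$ for every $0\leq k<m$ (so that the formula (\ref{na}) can be applied at step $m$); this is automatic since $f$ never attains the value $0$.
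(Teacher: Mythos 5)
Your proposal is correct and follows essentially the same route as the paper: both take norms of $f^m(x)=x$, invoke the norm formula (\ref{na}), and conclude that $\alpha^{1-(-2)^m}r^{(-2)^m}=r$ forces $r=\alpha$ because the exponent $1-(-2)^m$ is a nonzero integer. Your rearrangement to $\left(\alpha/r\right)^{1-(-2)^m}=1$ just makes explicit the step the paper compresses into the inequality $\alpha\cdot\left(r/\alpha\right)^{(-2)^m}\ne r$.
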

\begin{proof} We know that all three fixed points belong to $S_\alpha(0)$.
Let $x\in \C_p\setminus S_\alpha(0)$ be a $m$-periodic ($m\geq 2$) point for (\ref{fz}), i.e.,
$x$ satisfies $f^m(x)=x$. Then it is necessary that $|f^m(x)|_p=|x|_p$.
But for any $x\in \C_p\setminus S_\alpha(0)$ (i.e. $|x|_p=r\ne \alpha$),
by (\ref{na}) we get
\begin{equation}\label{nap}
|f^m(x)|_p=\alpha^{1-(-2)^m}\cdot r^{(-2)^m}=\alpha \cdot \left(r\over \alpha\right)^{(-2)^m}\ne r, \ \ \forall r\ne \alpha.
\end{equation}
Therefore, $f^m(x)=x$ is not satisfied for any $x\in \C_p\setminus S_\alpha(0)$.
\end{proof}

For given $r>0$, denote
$$r_n= \alpha^{1-(-2)^n}\cdot r^{(-2)^n}.$$
Then by (\ref{na}) one can see that the trajectory $f^n(x)$, $n\geq 1$ of $x\in S_r(0)$
has the following sequence of spheres on its route:
$$S_r(0) \rightarrow  S_{r_1}(0)\rightarrow S_{r_2}(0)\rightarrow S_{r_3}(0)\rightarrow\dots$$

Now we calculate the limits of $r_n$.

{\bf Case of even $n$}.  From (\ref{na}) it is easy to see that

$$
\lim_{n\to \infty} |f^n(x)|_p=\lim_{n\to \infty} r_n=\left\{\begin{array}{lll}
0, \ \ \mbox{if} \ \ r<\alpha\\[2mm]
\alpha, \ \ \mbox{if} \ \ r=\alpha\\[2mm]
+\infty, \ \ \mbox{if} \ \ r>\alpha
\end{array}
\right.
$$

{\bf Case of odd $n$}. In this case we have

$$
\lim_{n\to \infty} |f^n(x)|_p=\lim_{n\to \infty} r_n=\left\{\begin{array}{lll}
+\infty, \ \ \mbox{if} \ \ r<\alpha\\[2mm]
\alpha, \ \ \mbox{if} \ \ r=\alpha\\[2mm]
0, \ \ \mbox{if} \ \ r>\alpha
\end{array}
\right.
$$

Summarizing above-mentioned results we obtain
the following theorem:

\begin{thm}\label{ts} If $\alpha$ is defined by (\ref{nor}). Then
\begin{itemize}
\item[1.] if $x\in U_\alpha(0)$ then
$$\lim_{k\to \infty}f^{2k}(x)=0, \ \ \ \lim_{k\to \infty}|f^{2k-1}(x)|_p=+\infty.$$
\item[2.] if $x\in S_\alpha(0)$ then  $f^n(x)\in S_\alpha(0), \, n\geq 1.$
\item[3.] if $x\in \C_p\setminus V_\alpha(0)$ then
 $$\lim_{k\to \infty}|f^{2k}(x)|_p=+\infty, \ \ \ \lim_{k\to \infty}f^{2k-1}(x)=0.$$
 \end{itemize}
\end{thm}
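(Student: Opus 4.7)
The plan is to derive all three parts directly from the explicit norm formula~(\ref{na}), rewritten in the convenient form
\[
|f^n(x)|_p \;=\; \alpha\cdot\left(\tfrac{r}{\alpha}\right)^{(-2)^n}, \qquad r=|x|_p,
\]
which one gets by pulling out a factor of $\alpha^1 = \alpha^{(1-(-2)^n)+(-2)^n}$ from~(\ref{na}). The key observation is that the integer exponent $(-2)^n = (-1)^n 2^n$ has sign alternating with the parity of $n$ and absolute value growing to $\infty$. So each of the three statements reduces to chasing what happens to $(r/\alpha)^{\pm 2^n}$ as $n\to\infty$, according to whether the base $r/\alpha$ is less than, equal to, or greater than $1$.

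First, part~2 is simply a restatement of assertion~1 of the preceding lemma (invariance of $S_\alpha(0)$), iterated $n$ times; no further work is needed. For part~1, I would fix $x\in U_\alpha(0)$, so that $r<\alpha$ and $r/\alpha$ is a positive real number strictly less than $1$. On even iterates $n=2k$ the exponent $(-2)^{2k}=2^{2k}$ is positive and goes to $+\infty$, so $(r/\alpha)^{2^{2k}}\to 0$, giving $|f^{2k}(x)|_p\to 0$; since the $p$-adic distance from $f^{2k}(x)$ to $0$ equals $|f^{2k}(x)|_p$, this is precisely the statement $\lim_k f^{2k}(x)=0$ in $\C_p$. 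On odd iterates $n=2k-1$ the exponent $(-2)^{2k-1}=-2^{2k-1}$ is negative, so $(r/\alpha)^{(-2)^{2k-1}}=(\alpha/r)^{2^{2k-1}}\to +\infty$, yielding $|f^{2k-1}(x)|_p\to+\infty$.

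Part~3 is completely parallel: for $x\in\C_p\setminus V_\alpha(0)$ one has $r>\alpha$ and $r/\alpha>1$, so the roles of the two parities are exchanged; even iterates now blow up in norm, odd iterates now shrink to $0$. Assembling the four parity/region cases gives exactly the three bulleted assertions.

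I do not foresee a real obstacle here, since the content is essentially a case analysis of the monomial behavior of~(\ref{na}). The only thing worth stating carefully is the interpretation of the limits: $|f^n(x)|_p\to 0$ is equivalent to $f^n(x)\to 0$ in the metric of $\C_p$ (hence the genuine $p$-adic limit in parts~1 and~3 where the limit is $0$), whereas $|f^n(x)|_p\to+\infty$ is not a convergence statement in $\C_p$ itself but only a statement about norms, which matches the way the theorem is phrased.
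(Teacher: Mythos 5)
Your proposal is correct and follows essentially the same route as the paper: the paper also reduces everything to the norm formula~(\ref{na}) (equivalently $r_n=\alpha\,(r/\alpha)^{(-2)^n}$, as in~(\ref{nap})) and performs the same case analysis on the parity of $n$ and the comparison of $r$ with $\alpha$, with part~2 coming from the invariance of $S_\alpha(0)$.
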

\begin{rk} Note that Theorem \ref{ts} is true for more general function: $f(x)={a\over x^q}$, where $q$ is a natural number, $q\geq 2$.
In this case $\alpha=|a|_p^{1/(q+1)}$. The case $q=1$ is simple: in this case any point $x\in \C_p\setminus \{0\}$ is 2-periodic.
That is $f(f(x))=x$. Indeed,
$$f(f(x))={a\over {a\over x}}=a\cdot {x\over a}=x.$$
\end{rk}
\subsection{Dynamics on $S_\alpha(0)$.}
By Theorem \ref{ts} it remains to study the dynamical system of $f:S_\alpha(0)\to S_\alpha(0)$.
Recall that all fixed points $x_k$, $k=1,2,3$ are in $S_\alpha(0)$.

\begin{lemma} The distance between fixed points is
\begin{equation}\label{df}
|x_1-x_2|_p= |x_1-x_3|_p=|x_2-x_3|_p=\left\{\begin{array}{ll}
\alpha, \ \ \mbox{if} \ \ p\ne 3\\[2mm]
{\alpha\over \sqrt{3}},  \ \ \mbox{if} \ \ p=3
\end{array}
\right.
\end{equation}
\end{lemma}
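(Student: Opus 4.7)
The plan is to factor each difference $x_i - x_j = a^{1/3}(\theta_{i,3} - \theta_{j,3})$ so that, using $|a^{1/3}|_p = \alpha$, everything reduces to computing the pairwise $p$-adic distances among the three cube roots of unity $1, \omega, \omega^2$ in $\C_p$. Since $|\omega|_p^3 = |\omega^3|_p = 1$, we have $|\omega|_p = 1$, so all three roots lie on the unit sphere.

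Next I would show the three distances $|1-\omega|_p$, $|1-\omega^2|_p$, $|\omega - \omega^2|_p$ coincide. For the first pair use $1-\omega^2 = (1-\omega)(1+\omega)$ together with the relation $1+\omega+\omega^2=0$, giving $1+\omega = -\omega^2$ and hence $|1-\omega^2|_p = |1-\omega|_p$. For the third, factor $\omega - \omega^2 = \omega(1-\omega)$ and use $|\omega|_p = 1$. Thus all three pairwise differences $\theta_{i,3}-\theta_{j,3}$ have the common $p$-adic absolute value $|1-\omega|_p$.

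The crux is to pin down $|1-\omega|_p$, and for this I would use the identity
\begin{equation*}
(1-\omega)(1-\omega^2) = 1 - \omega - \omega^2 + \omega^3 = 2 - (\omega+\omega^2) + 1 - 1 = 3,
\end{equation*}
which, together with the previously established equality $|1-\omega|_p = |1-\omega^2|_p$, yields $|1-\omega|_p^2 = |3|_p$, so $|1-\omega|_p = \sqrt{|3|_p}$. Finally, $|3|_p = 1$ when $p\ne 3$ and $|3|_3 = 1/3$, which, after multiplying by $|a^{1/3}|_p = \alpha$, gives exactly the two cases of \eqref{df}.

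The only mildly delicate point is the argument that all three pairwise distances among the cube roots of unity are equal; once that symmetry is in place, the norm identity $(1-\omega)(1-\omega^2)=3$ does the rest, and no appeal to the strong triangle inequality or to the characteristic of the residue field is needed beyond extracting $|3|_p$.
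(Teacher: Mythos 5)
Your proof is correct: the factorizations $1-\omega^2=(1-\omega)(1+\omega)=-\omega^2(1-\omega)\cdot(\text{unit})$ and $\omega-\omega^2=\omega(1-\omega)$ do give equality of the three distances among the cube roots of unity, and the identity $(1-\omega)(1-\omega^2)=3$ then pins down the common value $|1-\omega|_p=\sqrt{|3|_p}$, which after scaling by $|a^{1/3}|_p=\alpha$ yields exactly \eqref{df}. The paper reaches the same conclusion by a more economical route: since $x_i^3=x_j^3=a$ with $x_i\ne x_j$, dividing $x_i^3-x_j^3=0$ by $x_i-x_j$ gives $x_i^2+x_ix_j+x_j^2=0$, which rearranges to the single symmetric identity $(x_i-x_j)^2=-3x_ix_j$; taking norms immediately gives $|x_i-x_j|_p^2=|3|_p\alpha^2$ for \emph{each} pair separately, so no preliminary argument that the three distances coincide is needed. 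Both proofs ultimately extract the same arithmetic content (the factor $|3|_p$, i.e.\ the discriminant phenomenon for $x^3-a$); your version isolates the roots of unity and so makes transparent that the answer is independent of $a$, while the paper's version buys a one-line derivation that treats all pairs uniformly. Your computation $1-\omega-\omega^2+\omega^3=3$ is right, though the intermediate rewriting ``$2-(\omega+\omega^2)+1-1$'' is cluttered; it is cleaner to say $\omega^3=1$ and $\omega+\omega^2=-1$, hence the product is $1+1-(-1)=3$.
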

\begin{proof} Since $x_i^3=a$, $i=1,2,3$, for $x_i\ne x_j$ we have
$$0=x_i^3-x_j^3=(x_i-x_j)(x_i^2+x_ix_j+x_j^2) \ \ \Rightarrow \ \ x_i^2+x_ix_j+x_j^2=0$$
$$ \Leftrightarrow \ \ (x_i-x_j)^2=-3x_ix_j \ \ \Rightarrow \ \ |x_i-x_j|_p^2=|3x_ix_j|_p.$$
From the last equality, using $|x_i|_p=|x_j|_p=\alpha$, we get (\ref{df}).
\end{proof}
Take $x\in S_\alpha(0)$ such that $|x-x_1|_p=\rho$, i.e.,
$x=x_1+\gamma$, with $|\gamma|_p=\rho$. Note that $\rho\leq \alpha$. Then by Lemma \ref{fo} we have
\begin{equation}\label{nf}
|f^n(x)-x_1|_p= |f^n(x)-f^n(x_1)|_p=\alpha^{1-(-2)^n}|x^{(-2)^n}-x_1^{(-2)^n}|_p.
\end{equation}
Now we use the following formula
$$x^{2^n}-y^{2^n}=(x-y)\prod_{j=0}^{n-1}(x^{2^j}+y^{2^j}).$$
Then from (\ref{nf}) we get
\begin{equation}\label{2n}
|f^n(x)-x_1|_p= \alpha^{1-(-2)^n}\cdot \left\{\begin{array}{ll}
\rho\prod_{j=0}^{n-1}|(x_1+\gamma)^{2^j}+x_1^{2^j}|_p, \ \ \mbox{if} \ \ n \ \ \mbox{is even}\\[3mm]
{\rho\over |xx_1|_p}\prod_{j=0}^{n-1}|(x_1+\gamma)^{-2^j}+x_1^{-2^j}|_p, \ \ \mbox{if} \ \ n \ \ \mbox{is odd}.
\end{array}\right.
\end{equation}

We have
\begin{equation}\label{3a}
|(x_1+\gamma)^{2^j}+x_1^{2^j}|_p=\left|2x_1^{2^j}+\sum_{s=1}{2^j\choose s}x_1^{2^j-s}\gamma^s\right|_p=\left\{\begin{array}{ll}
|2|_p\alpha^{2^j}, \ \ \mbox{if} \ \ \rho<\alpha\\[2mm]
\leq |2|_p\alpha^{2^j}, \ \ \mbox{if} \ \ \rho=\alpha.
\end{array}
\right.
\end{equation}
Here we used that
$$\left|{2^j\choose s}\right|_p\leq \left\{\begin{array}{ll}
{1\over 2}, \ \ \mbox{if} \ \ p=2\\[2mm]
1, \ \ \mbox{if} \ \ p\geq 3
\end{array}\right.$$

Using (\ref{3a}) we get
\begin{equation}\label{3b}
|(x_1+\gamma)^{-2^j}+x_1^{-2^j}|_p= {|(x_1+\gamma)^{2^j}+x_1^{2^j}|_p \over   |(x_1+\gamma)^{2^j}x_1^{2^j}|_p}=  \left\{\begin{array}{lll}
|2|_p\alpha^{-2^j}, \ \ \mbox{if} \ \ \rho<\alpha\\[2mm]
\leq |2|_p {1\over |(x_1+\gamma)^{2^j}|_p}, \ \ \mbox{if} \ \ \rho=\alpha.
\end{array}
\right.
\end{equation}
In \textbf{case of even} $n$, by (\ref{3a}) from (\ref{2n}) we get
$$
|f^n(x)-x_1|_p= \alpha^{1-2^n}\cdot \rho\prod_{j=0}^{n-1}|(x_1+\gamma)^{2^j}+x_1^{2^j}|_p
$$
\begin{equation}\label{2na}
=\rho\cdot
\alpha^{1-2^n}\cdot |2|_p^n \prod_{j=0}^{n-1}\alpha^{2^j}\cdot\left\{\begin{array}{ll}
1, \ \ \mbox{if} \ \ \rho<\alpha\\[2mm]
\leq 1, \ \ \mbox{if} \ \ \rho=\alpha
\end{array}\right. =\rho\cdot |2|_p^n\cdot\left\{\begin{array}{ll}
1, \ \ \mbox{if} \ \ \rho<\alpha\\[2mm]
\leq 1, \ \ \mbox{if} \ \ \rho=\alpha
\end{array}\right.
\end{equation}

Similarly, in \textbf{case of odd} $n$, by (\ref{3b}) from (\ref{2n}) we get
\begin{equation}\label{2na}
|f^n(x)-x_1|_p= \alpha^{1+2^n}\cdot {\rho\over \alpha^2}\cdot |2|_p^n\prod_{j=0}^{n-1}\alpha^{-2^j}
 =\rho \cdot |2|_p^n\ \ \mbox{if} \ \ \rho<\alpha.
\end{equation}
The same formulas are also true for $x_2$ and $x_3$.

For fixed $\alpha$ (defined in (\ref{nor})) and $t\in S_\alpha(0)$ denote
$$\mathcal S_{\rho,t}=S_\alpha(0)\cap S_\rho(t)=\{x\in S_\alpha(0): |x-t|_p=\rho\}.$$
Thus we have proved the following lemma
\begin{lemma}\label{sw} Let  $\rho<\alpha$.
 Then for any $x\in \mathcal S_{\rho, x_i}$ ($i=1,2,3$) we have
 \begin{itemize}
 \item if $p=2$ then
 $$f^n(x)\in \mathcal S_{2^{-n}\rho, x_i}.$$
 \item if $p\geq 3$ then
$$f^n(x)\in \mathcal S_{\rho, x_i}, \ \ n\geq 1.$$
In particular, the set $\mathcal S_{\rho, x_i}$ is invariant with respect to $f$ for any $\rho< \alpha$.
\end{itemize}
\end{lemma}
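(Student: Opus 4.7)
The plan is to read off the conclusion directly from equations (\ref{2na}), which essentially do all the work; the lemma is really a clean summary of the preceding calculation.

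First I would observe that both branches of (\ref{2na})---the even-$n$ and odd-$n$ branches---collapse in the regime $\rho < \alpha$ to the single identity
$$|f^n(x) - x_i|_p = \rho \cdot |2|_p^n.$$
On the even side this uses the telescoping $\sum_{j=0}^{n-1} 2^j = 2^n - 1$, which exactly cancels the prefactor $\alpha^{1-2^n}$ against $\prod_{j=0}^{n-1}\alpha^{2^j}$; on the odd side one additionally uses $|x_1|_p = \alpha$ to simplify the $1/|xx_1|_p$ factor against the negative-exponent product. The paper has already observed that the identical computation holds with $x_2$ or $x_3$ in place of $x_1$.

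Next, I would invoke the invariance of $S_\alpha(0)$ under $f$ (part 1 of the previous lemma), which ensures $f^n(x) \in S_\alpha(0)$ and hence that $f^n(x)$ genuinely lies in a set of the form $\mathcal S_{\cdot, x_i}$ rather than merely on some sphere in $\C_p$ around $x_i$. Finally I would split on $p$: when $p = 2$, $|2|_p = 1/2$ gives $|f^n(x) - x_i|_p = 2^{-n}\rho$, so $f^n(x) \in \mathcal S_{2^{-n}\rho, x_i}$; when $p \geq 3$, $|2|_p = 1$ gives $|f^n(x) - x_i|_p = \rho$, so $f^n(x) \in \mathcal S_{\rho, x_i}$, which also yields the ``in particular'' invariance statement for $p \geq 3$.

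I do not foresee a serious obstacle. The one point requiring slight care is to confirm that (\ref{2na}) is an equality and not merely an upper bound in the regime $\rho < \alpha$. This is exactly the content of the top branches of (\ref{3a}) and (\ref{3b}): when $\rho < \alpha$, property 3.1 of the $p$-adic norm forces the evaluation $|(x_1+\gamma)^{2^j} + x_1^{2^j}|_p = |2|_p \alpha^{2^j}$ on the nose, so the computation propagates as equalities through the product. Since the substantive work is already done, the proof reduces to assembling these ingredients and unfolding the two cases for $|2|_p$.
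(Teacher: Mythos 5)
Your route is the same as the paper's: the paper offers no separate proof of this lemma beyond the computation in (\ref{2n})--(\ref{2na}) followed by the words ``Thus we have proved the following lemma'', and your proposal assembles exactly those displays, splitting on $|2|_p$ at the end. Your extra remark that one must also invoke the invariance of $S_\alpha(0)$ to conclude $f^n(x)\in\mathcal S_{\cdot,x_i}$ rather than merely $f^n(x)\in S_{\cdot}(x_i)$ is a point the paper leaves implicit, and it is correct.

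However, the step you single out as ``the one point requiring slight care'' is exactly where both your argument and the paper's break down when $p=2$. Property 3.1 yields the equality $|(x_1+\gamma)^{2^j}+x_1^{2^j}|_p=|2|_p\alpha^{2^j}$ only if the term $2x_1^{2^j}$ strictly dominates every other term of the binomial expansion. For $p\geq 3$ this holds for all $\rho<\alpha$, since $|2x_1^{2^j}|_p=\alpha^{2^j}$ while the remaining terms have norm at most $\alpha^{2^j-s}\rho^s<\alpha^{2^j}$; that branch of the lemma is sound. But for $p=2$ the leading term has norm $\frac12\alpha^{2^j}$, while the $s=2^j$ term $\gamma^{2^j}$ has norm $\rho^{2^j}$ (note $\bigl|\binom{2^j}{2^j}\bigr|_2=1$, so the bound $\bigl|\binom{2^j}{s}\bigr|_2\leq\frac12$ used in (\ref{3a}) does not apply to it). Already at $j=0$ the factor is $2x_1+\gamma$, whose norm is $\max\{\alpha/2,\rho\}$ when these differ; so for $\alpha/2<\rho<\alpha$ (such $\rho$ exist in $\C_2$, whose value group is dense) one gets $|2x_1+\gamma|_2=\rho$, not $|2|_2\alpha$. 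Concretely, with $a=1$, $x_1=1$, $|\gamma|_2=2^{-2/3}$, one computes $|f(x)-1|_2=|1-x|_2\,|1+x|_2/|x|_2^2=\rho^2=2^{-4/3}$, whereas the lemma predicts $\rho/2=2^{-5/3}$. So the $p=2$ clause requires the stronger hypothesis $\rho<|2|_2\,\alpha=\alpha/2$ (after the first step the radius only shrinks, so the induction then proceeds); as written, neither your proof nor the paper's establishes it for all $\rho<\alpha$.
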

Denote
$$\mathcal V_{\rho, t}=\bigcup_{0\leq r< \rho}\mathcal S_{r,t}=\{x\in S_\alpha(0): |x-t|_p<\rho\}.$$
\begin{lemma}\label{3d} If $x\in \mathcal S_{\rho, x_i}$, for some $i=1,2,3$ then
\begin{itemize}
\item[i.]
If $\rho$ is such that
$$\rho<\left\{\begin{array}{ll}
\alpha, \ \ \mbox{if} \ \ p\ne 3\\[2mm]
{\alpha\over \sqrt{3}},  \ \ \mbox{if} \ \ p=3.
\end{array}\right.
$$
 then
$$x\in \left\{\begin{array}{ll}
\mathcal S_{{\alpha\over \sqrt{3}}, x_j}, \ \ \mbox{for} \ \ p=3\\[2mm]
\mathcal S_{\alpha, x_j}, \ \ \mbox{for} \ \ p\ne 3,
\end{array}\right. \ \ j\ne i.$$
\item[ii.] If $p=3$ and $\rho\geq {\alpha\over \sqrt{3}}$ then
$$x\in \left\{\begin{array}{ll}
\mathcal V_{\rho, x_j}, \ \ \mbox{for} \ \ \rho= {\alpha\over \sqrt{3}}\\[2mm]
\mathcal S_{\rho, x_j}, \ \ \mbox{for} \ \ \rho> {\alpha\over \sqrt{3}},
\end{array}\right. \ \ j\ne i.$$
\end{itemize}
\end{lemma}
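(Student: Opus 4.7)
The plan is to apply the strong triangle inequality of property 3 to the decomposition
\[
x - x_j = (x - x_i) + (x_i - x_j),
\]
for each $j \neq i$. We have $|x - x_i|_p = \rho$ by assumption, while the previous lemma supplies the mutual fixed-point distance $|x_i - x_j|_p$, equal to $\alpha$ for $p \neq 3$ and to $\alpha/\sqrt{3}$ for $p = 3$. The entire proof then reduces to a three-way case analysis comparing $\rho$ with this constant.

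In part (i), in both sub-cases the hypothesis forces $\rho < |x_i - x_j|_p$, so the two summand-norms in the decomposition are distinct; property 3.1 then yields $|x - x_j|_p = |x_i - x_j|_p$, placing $x$ on the sphere $\mathcal{S}_{\alpha, x_j}$ (when $p \neq 3$) or $\mathcal{S}_{\alpha/\sqrt{3}, x_j}$ (when $p = 3$), as claimed. The sub-case $\rho > \alpha/\sqrt{3}$ of part (ii) is handled identically with the roles reversed: now $\rho > |x_i - x_j|_p$, the summand-norms are still distinct, and property 3.1 gives $|x - x_j|_p = \rho$, i.e., $x \in \mathcal{S}_{\rho, x_j}$.

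The only case requiring care is $p = 3$ with $\rho = \alpha/\sqrt{3} = |x_i - x_j|_p$. Here the two summand-norms coincide, so property 3.1 is inapplicable and only the weak ultrametric bound $|x - x_j|_p \leq \alpha/\sqrt{3}$ survives, which is what the lemma records as $x \in \mathcal{V}_{\rho, x_j}$. The main point to watch is precisely that for $p = 3$ there is no analogue of property 3.2 (which for $p = 2$ would have given a strict drop in norm when the two summands have equal norm), so this borderline regime genuinely allows equality and cannot be sharpened to a sphere statement; this single observation is the reason the conclusion is phrased via the ball $\mathcal{V}_{\rho, x_j}$ rather than a sphere, in contrast to every other case.
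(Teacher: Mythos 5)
Your proof is correct and follows essentially the same route as the paper: decompose $x-x_j=(x-x_i)+(x_i-x_j)$, feed in the fixed-point distances from the preceding lemma, and split into the three cases $\rho<$, $=$, $>|x_i-x_j|_p$ using the ultrametric properties. Your explicit remark that the borderline case $\rho=\alpha/\sqrt{3}$ only yields the non-strict bound $|x-x_j|_p\leq\rho$ (so that, strictly speaking, the conclusion is membership in the closed ball rather than the open set $\mathcal V_{\rho,x_j}$ as literally defined) is a fair observation that the paper's own proof shares.
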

\begin{proof} For $x\in \mathcal S_{\rho, x_i}$, using property of $p$-adic norm and formula (\ref{df}) we get
$$|x-x_j|_p=|x-x_i+x_i-x_j|_p=\left\{\begin{array}{llll}
\alpha, \ \ \mbox{if} \ \ p\ne 3\\[2mm]
{\alpha\over \sqrt{3}},  \ \ \mbox{if} \ \ p=3, \ \ \rho<{\alpha\over \sqrt{3}}\\[2mm]
\leq\rho,  \ \ \mbox{if} \ \ p=3, \ \ \rho={\alpha\over \sqrt{3}}\\[2mm]
\rho,  \ \ \mbox{if} \ \ p=3, \ \ \rho>{\alpha\over \sqrt{3}}
\end{array}
\right. $$
This completes the proof.
\end{proof}
Denote
$$\mathcal U_\alpha=\{x\in S_\alpha(0): |x-x_1|_p=|x-x_2|_p=|x-x_3|_p=\alpha\}.$$
As a corollary of Lemma \ref{3d} we have
\begin{lemma} If $p\ne 3$ then $S_\alpha(0)$ has the following partition
$$S_\alpha(0)=\mathcal U_\alpha\cup \bigcup_{i=1}^3\mathcal V_{\alpha, x_i}.$$
\end{lemma}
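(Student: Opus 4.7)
The claim is essentially a direct corollary of Lemma \ref{3d} (part (i), case $p\neq 3$) combined with the strong triangle inequality, so the plan is just to verify coverage and pairwise disjointness.

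First, I would argue coverage. Let $x\in S_\alpha(0)$ be arbitrary. Since $|x|_p=|x_i|_p=\alpha$ for $i=1,2,3$, the strong triangle inequality gives
$$|x-x_i|_p\leq \max\{|x|_p,|x_i|_p\}=\alpha\qquad (i=1,2,3).$$
There are two possibilities. Either $|x-x_i|_p=\alpha$ for every $i=1,2,3$, in which case $x\in\mathcal U_\alpha$ by definition; or there exists some $i\in\{1,2,3\}$ with $|x-x_i|_p<\alpha$, in which case $x\in\mathcal V_{\alpha,x_i}$. Hence $S_\alpha(0)$ is contained in the union on the right-hand side.

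Next, I would verify disjointness. The intersection $\mathcal U_\alpha\cap\mathcal V_{\alpha,x_i}$ is empty because the defining conditions $|x-x_i|_p=\alpha$ and $|x-x_i|_p<\alpha$ are incompatible. For $i\neq j$, suppose $x\in\mathcal V_{\alpha,x_i}\cap\mathcal V_{\alpha,x_j}$. Then $|x-x_i|_p<\alpha$ and $|x-x_j|_p<\alpha$, and by the strong triangle inequality
$$|x_i-x_j|_p=|(x_i-x)-(x_j-x)|_p\leq\max\{|x-x_i|_p,|x-x_j|_p\}<\alpha.$$
This contradicts formula (\ref{df}), which for $p\neq 3$ gives $|x_i-x_j|_p=\alpha$. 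Thus the three sets $\mathcal V_{\alpha,x_i}$ are pairwise disjoint.

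The two steps together yield the claimed partition, and I do not anticipate any real obstacle: the only point that must be invoked explicitly is the hypothesis $p\neq 3$, which is used precisely once, to guarantee $|x_i-x_j|_p=\alpha$ in the disjointness argument (if $p=3$ were allowed, the weaker bound $|x_i-x_j|_p=\alpha/\sqrt 3$ would be compatible with both $x_i,x_j$ being close to $x$, so the $\mathcal V_{\alpha,x_i}$ would overlap and the stated partition would fail).
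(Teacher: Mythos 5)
Your proof is correct and follows essentially the route the paper intends: the paper states this lemma without proof as a corollary of Lemma \ref{3d}, and your coverage step (ultrametric inequality forcing $|x-x_i|_p\leq\alpha$) together with the disjointness step via formula (\ref{df}) is exactly the natural fleshing-out of that claim. Nothing is missing; you even correctly isolate where the hypothesis $p\neq 3$ enters.
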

\begin{lemma} Let $\alpha$ is defined by (\ref{nor}) then
\begin{itemize}
\item[1.] If $p=2$ then the set $\mathcal U_\alpha$ is invariant with respect to $f$.

\item[2.] If $p\geq 3$ and $x\in \mathcal U_\alpha$ then one of the following assertions holds

\item[2.a)] There exists $n_0$ and $\mu_{n_0}< \alpha$ such that
$$\begin{array}{ll}
f^n(x)\in \mathcal U_\alpha, \ \ \forall n\leq n_0,\\[2mm]
f^n(x)\in \mathcal S_{\mu_{n_0}}(x_i), \ \ \forall n>n_0\ \ \mbox{for some} \ \ i=1,2,3.
\end{array}$$
\item[2.b)] $f^n(x)\in \mathcal U_\alpha, \ \ \forall n\geq 1.$
\end{itemize}
\end{lemma}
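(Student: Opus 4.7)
The strategy is to first derive a simple identity for $|f(x) - x_i|_p$ valid at every fixed point, and then split into the two regimes distinguished by the value of $|2|_p$. Using $a = x_i^3$, I factor
\begin{equation*}
f(x) - x_i = \frac{a - x_i x^2}{x^2} = \frac{x_i(x_i - x)(x_i + x)}{x^2},
\end{equation*}
so that for $x \in \mathcal U_\alpha$, using $|x|_p = |x_i|_p = \alpha$ and $|x - x_i|_p = \alpha$,
\begin{equation*}
|f(x) - x_i|_p = \frac{|x_i|_p\, |x - x_i|_p\, |x + x_i|_p}{|x|_p^2} = |x + x_i|_p.
\end{equation*}
Everything then reduces to estimating $|x + x_i|_p = |(x - x_i) + 2 x_i|_p$ with the strong triangle inequality.

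For part 1, let $p = 2$. Then $|2 x_i|_2 = \alpha/2 < \alpha = |x - x_i|_2$, so property 3.1 of the $p$-adic norm gives $|x + x_i|_2 = \alpha$. Since this holds for each $i = 1, 2, 3$, we conclude $f(x) \in \mathcal U_\alpha$, which is invariance.

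For part 2, let $p \geq 3$. Now $|2 x_i|_p = \alpha = |x - x_i|_p$, so the strong triangle inequality only yields $|x + x_i|_p \leq \alpha$ and equality may fail. Thus $f(x)$ either remains in $\mathcal U_\alpha$ or lands on some sphere $\mathcal S_{\rho, x_i}$ with $\rho < \alpha$ (the image still lies on $S_\alpha(0)$ by the earlier invariance of this sphere). If $f^n(x) \in \mathcal U_\alpha$ for every $n \geq 1$, we are in case 2.b. Otherwise, let $n_0$ be the largest index with $f^n(x) \in \mathcal U_\alpha$ for all $n \leq n_0$; then $f^{n_0+1}(x) \notin \mathcal U_\alpha$, so for some $i \in \{1,2,3\}$ the value $\mu_{n_0} := |f^{n_0+1}(x) - x_i|_p$ satisfies $\mu_{n_0} < \alpha$. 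Lemma \ref{sw} (in its $p \geq 3$ clause) then asserts that $\mathcal S_{\mu_{n_0}, x_i}$ is $f$-invariant, so $f^n(x) \in \mathcal S_{\mu_{n_0}, x_i}$ for all $n > n_0$, which is case 2.a.

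The main obstacle is conceptual rather than computational: one must recognize that the entire $p = 2$ versus $p \geq 3$ dichotomy is forced by $|2|_p$ through the identity $|f(x) - x_i|_p = |x + x_i|_p$, and that once an orbit escapes $\mathcal U_\alpha$ the work of containing it is already done by Lemma \ref{sw}. With the factored identity in hand, both parts fall out immediately.
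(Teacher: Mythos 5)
Your proof is correct and takes essentially the same route as the paper's: the same factorization yielding $|f(x)-x_i|_p=|x+x_i|_p=|(x-x_i)+2x_i|_p$, the same dichotomy driven by $|2|_p$, and the same appeal to Lemma \ref{sw} to trap the orbit in $\mathcal S_{\mu_{n_0},x_i}$ once it leaves $\mathcal U_\alpha$. Your formalization of the escape time $n_0$ is a slightly cleaner way of phrasing the paper's ``iterating this argument'' step, but the content is identical.
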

\begin{proof}
1. For any $x\in \mathcal U_\alpha$ we have
$$|f(x)-x_i|_p=\left|{a\over x^2}-{a\over x_i^2}\right|_p=|a|_p\left|{(x_i-x)(x_i+x)\over x^2x_i^2}\right|_p$$
\begin{equation}\label{p2}
=\alpha^3\cdot {\alpha |x+x_i|_p\over \alpha^4}=|x+x_i|_p=|x-x_i+2x_i|_p=\left\{\begin{array}{ll}
\alpha, \ \ \mbox{if} \ \ p=2\\[2mm]
\mu_{1,i}, \ \ \mbox{if} \ \ p\geq 3,
\end{array}\right.\end{equation}
where $\mu_{1,i}\leq \alpha.$ The part 1 follows from this equality.

2. If in (\ref{p2}) there exists $i$ such that $\mu_{1,i}=|x+x_i|_p<\alpha$, then
 $f(x)\in \mathcal S_{\mu_{1,i}, x_i}$. The set $\mathcal S_{\mu_{1,i}, x_i}$ is invariant with respect to $f$.
In case of all $\mu_{1,i}=\alpha$ we have $f(x)\in \mathcal U_\alpha$. Then we note that
$$|f^2(x)-x_i|_p=|f(x)-x_i+2x_i|_p=\left\{\begin{array}{ll}
\alpha, \ \ \mbox{if} \ \ p=2\\[2mm]
\mu_{2,i}\leq \alpha, \ \ \mbox{if} \ \ p\geq 3
\end{array}\right.$$
Thus we can repeat the above argument: if there exists $i$ such that $\mu_{2,i}<\alpha$, then  $f^2(x)\in \mathcal S_{\mu_{2,i}, x_i}$ which is invariant with respect to $f$. If all $\mu_{2,i}=\alpha$ then $f^2(x)\in \mathcal U_\alpha$. Iterating this argument one proves the part 2.
\end{proof}

\begin{lemma} For $k\in \{1,2,3\}$, $j\in \{1,2,3\}\setminus \{k\}$ and fixed points $x_k$, $x_j$ we have
\begin{itemize}
\item[1.] $x_j\notin \mathcal V_{\rho, x_k},$
if and only if
$$\rho\leq\left\{\begin{array}{ll}
\alpha, \ \ \mbox{if} \ \ p\ne 3\\[2mm]
{\alpha\over \sqrt{3}},  \ \ \mbox{if} \ \ p=3.
\end{array}
\right.$$
\item[2.] if $p=2$ then
$$\mathcal V_{\alpha, x_j}\cap \mathcal V_{\alpha, x_k}=\emptyset, \ \ \mbox{for all} \ \ j, k\in \{1,2,3\}, \, j\ne k,$$
\end{itemize}
\end{lemma}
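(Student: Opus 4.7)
The plan is to read off both parts directly from the distance formula \eqref{df} combined with the strong triangle inequality, with no new computation required beyond unpacking the definitions.

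For part 1, I start by unravelling the definition of $\mathcal V_{\rho,x_k}$: since $\mathcal V_{\rho,x_k}=\{x\in S_\alpha(0):|x-x_k|_p<\rho\}$ and $x_j\in S_\alpha(0)$, the condition $x_j\notin\mathcal V_{\rho,x_k}$ is exactly $|x_j-x_k|_p\geq\rho$. The lemma preceding this statement (formula \eqref{df}) already computes
\[
|x_j-x_k|_p=\begin{cases}\alpha,&p\neq 3,\\ \alpha/\sqrt{3},&p=3,\end{cases}
\]
so the inequality $|x_j-x_k|_p\geq\rho$ is literally the condition listed in the statement. This gives the ``if and only if'' in one line each direction.

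For part 2, I proceed by contradiction. Suppose $p=2$ and there exists $x\in \mathcal V_{\alpha,x_j}\cap \mathcal V_{\alpha,x_k}$ for some $j\neq k$. Then both $|x-x_j|_2<\alpha$ and $|x-x_k|_2<\alpha$, and the strong triangle inequality applied to $x_j-x_k=(x_j-x)+(x-x_k)$ yields $|x_j-x_k|_2\leq\max\{|x-x_j|_2,|x-x_k|_2\}<\alpha$. Since $p=2\neq 3$, formula \eqref{df} gives $|x_j-x_k|_2=\alpha$, contradiction.

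There is no genuine obstacle here; both parts reduce to quoting \eqref{df} together with the ultrametric inequality, and the only point worth flagging is that part 2 uses $p\neq 3$ (so that the fixed points are maximally separated on $S_\alpha(0)$), which is exactly where the hypothesis $p=2$ enters. If one wished, the argument in part 2 would extend to every $p\neq 3$, but as stated only $p=2$ is claimed.
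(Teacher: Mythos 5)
Your proof is correct and follows essentially the same route as the paper, which simply states that the lemma ``follows from (\ref{df})'' (together with a citation of an earlier lemma); the key ingredient in both cases is the distance formula (\ref{df}) plus the ultrametric inequality. Your write-up is actually more complete than the paper's one-line proof, and your closing remark that part 2 holds for every $p\ne 3$ is accurate.
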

\begin{proof} Follows from (\ref{df}) and Lemma \ref{sw}.
\end{proof}

Summarizing above mentioned results we get
\begin{thm}\label{tox}
If $\alpha$ is defined by (\ref{nor}). Then for the dynamical system generated by $f: S_\alpha(0)\to S_\alpha(0)$ given in (\ref{fz}) the following assertions hold.
\begin{itemize}
\item[1.] if $p=2$ then $\mathcal A(x_j)= \mathcal V_{\alpha, x_j}$, i.e.,
$$\lim_{n\to \infty}f^{n}(x)=x_j, \ \ \mbox{for any} \ \ x\in  \mathcal V_{\alpha, x_j}.$$
$$f^n(x)\in \mathcal U_\alpha, \ \ n\geq 1, \ \ \mbox{for all} \ \ x\in \mathcal U_\alpha.$$
\item[2.] if $p\geq 3$ then
$$SI(x_j)=\mathcal V_{\alpha, x_j}, \ \ j\in \{1,2,3\}.$$
Moreover,
$$SI(x_1)=SI(x_2)=SI(x_3), \ \ \mbox{if} \ \ p=3.$$
$$SI(x_j)\cap SI(x_k)=\emptyset, \ \ \mbox{if} \ \ p>3.$$
\item[3.] If $p\geq 3$ and $x\in \mathcal U_\alpha$ then one of the following assertions holds

\item[3.a)] There exists $n_0$ and $\mu_{n_0}< \alpha$ such that
$$\begin{array}{ll}
f^n(x)\in \mathcal U_\alpha, \ \ \forall n\leq n_0,\\[2mm]
f^n(x)\in \mathcal S_{\mu_{n_0}}(x_i), \ \ \forall n>n_0\ \ \mbox{for some} \ \ i=1,2,3.
\end{array}$$
\item[3.b)] $f^n(x)\in \mathcal U_\alpha, \ \ \forall n\geq 1.$
 \end{itemize}
\end{thm}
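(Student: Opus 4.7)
The plan is to assemble Theorem \ref{tox} as a direct synthesis of the three preceding lemmas on the dynamics inside $S_\alpha(0)$, with only a small amount of additional bookkeeping to match the definitions of attractor and Siegel disk.

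For Part 1 ($p=2$), I would first establish $\mathcal V_{\alpha, x_j} \subset \mathcal A(x_j)$: any $x$ with $\rho := |x-x_j|_p < \alpha$ lies on $\mathcal S_{\rho, x_j}$, and Lemma \ref{sw} gives $|f^n(x)-x_j|_p = 2^{-n}\rho \to 0$. For the reverse inclusion I would invoke the last two lemmas: points of $\mathcal U_\alpha$ remain in $\mathcal U_\alpha$ under iteration and thus stay at distance exactly $\alpha$ from every fixed point, so they cannot converge to any $x_j$; meanwhile, for $i \neq j$, the set $\mathcal V_{\alpha, x_i}$ is forward-invariant and its trajectories tend to $x_i$, not $x_j$. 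Combined with the disjointness $\mathcal V_{\alpha, x_j}\cap \mathcal V_{\alpha, x_k}=\emptyset$ from the final lemma before the theorem, this yields $\mathcal A(x_j) = \mathcal V_{\alpha, x_j}$. The second bullet of Part 1 is the invariance statement in Part 1 of the $\mathcal U_\alpha$-lemma.

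For Part 2 ($p \geq 3$), Lemma \ref{sw} shows that every sphere $\mathcal S_{\rho, x_j}$ with $\rho < \alpha$ is $f$-invariant, which is the defining Siegel-disk condition; so $\mathcal V_{\alpha, x_j} \subset SI(x_j)$. Maximality follows because enlarging the radius to $\alpha$ would intersect either $\mathcal U_\alpha$ or another $\mathcal V_{\alpha, x_k}$, on which the sphere structure centered at $x_j$ is not preserved. The split between $p=3$ and $p>3$ is then read off from (\ref{df}): when $p=3$ one has $|x_j-x_k|_p = \alpha/\sqrt{3} < \alpha$, so the strong triangle inequality places every $x_k$ inside $\mathcal V_{\alpha, x_j}$ and forces the three balls to coincide; when $p>3$ the distance is $\alpha$, and the three balls are disjoint by the final lemma before the theorem.

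Part 3 is verbatim Part 2 of the $\mathcal U_\alpha$-lemma and requires no new argument. The only step that demands any care is the $p=3$ collapse $SI(x_1)=SI(x_2)=SI(x_3)$, which is just the ultrametric inequality applied to (\ref{df}); I do not expect any genuine analytical obstacle beyond this, since Lemmas \ref{sw} through the $\mathcal U_\alpha$-lemma already do all the quantitative work.
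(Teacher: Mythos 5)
Your proposal matches the paper's approach exactly: the paper offers no proof of Theorem \ref{tox} beyond the phrase ``Summarizing above mentioned results we get,'' and your synthesis of Lemma \ref{sw}, the $\mathcal U_\alpha$-invariance lemma, the partition of $S_\alpha(0)$, and formula (\ref{df}) is precisely the intended argument. The one place where your justification is looser than it should be is the maximality of the Siegel disk in Part 2: points of $\mathcal U_\alpha$ or of $\mathcal V_{\alpha, x_k}$ ($k\ne j$) can remain at distance exactly $\alpha$ from $x_j$ under iteration, so the sphere structure there is not in fact destroyed; the clean reason that $S_\alpha(x_j)$ fails to be invariant is that it contains points of $U_\alpha(0)$, whose orbits leave it by Theorem \ref{ts} --- though the paper does not address this point either.
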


This theorem does not give behavior of $f^n(x)\in \mathcal U_\alpha, \ \ n\geq 1$, i.e., in the case when the trajectory remains
  in $\mathcal U_\alpha$ (that is when $p=2$ and in the case part 3.b of Theorem \ref{tox}).
  Since there is not any fixed point of $f$ in $\mathcal U_\alpha$, below we are interested
   to periodic points of $f$ in $\mathcal U_\alpha$: for a given natural $m\geq 2$ the $m$-periodic
   points of this set are solutions of
   the following system of equations
\begin{equation}\label{pp}\begin{array}{ll}
f^m(x)=a^{{1\over 3}(1-(-2)^m)}\cdot x^{(-2)^m}=x,\\[2mm]
|x-x_1|_p=|x-x_2|_p=|x-x_3|_p=\alpha.
\end{array}
\end{equation}
\begin{rk} Note that in case $m=2$, there is no any solution to the first equation of (\ref{pp}) (except fixed points).
Therefore below we consider the case $m\geq 3$.
\end{rk}
Denote
$$M_m=\left\{\begin{array}{ll}
\left\{(j,p): \left|\theta_{k,3}-\theta_{j, 2^m-1}\right|_p=1, \ \ \forall k=1,2,3\right\} \ \ \mbox{if} \ \ m \ \ \mbox{is even}\\[2mm]
\left\{(j,p): \left|\theta_{k,3}-\theta_{j, 2^m+1}\right|_p=1, \ \ \forall k=1,2,3\right\} \ \ \mbox{if} \ \ m \ \ \mbox{is odd}
\end{array}
\right.$$
\begin{lemma} The solutions of the system (\ref{pp}) in $\C_p$ are
\begin{equation}\label{ss}
\hat x_j=a^{1\over 3}\cdot \left\{\begin{array}{ll}
\theta_{j, 2^m-1}, \ \ \mbox{if} \ \ m \ \ \mbox{is even}\\[2mm]
1/\theta_{j, 2^m+1}, \ \ \mbox{if} \ \ m \ \ \mbox{is odd}
\end{array}
\right.
\end{equation}
where $(j,p)\in M_m$.
\end{lemma}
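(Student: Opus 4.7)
The plan is to apply Lemma \ref{fo} to reduce the first equation of (\ref{pp}) to an elementary polynomial equation in $x$, solve it by extracting roots of unity, and then use the second equation to select the admissible indices.

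For $m$ even, $(-2)^m=2^m$, so $f^m(x)=x$ becomes $a^{(1-2^m)/3}x^{2^m}=x$, i.e.\ $x^{2^m-1}=a^{(2^m-1)/3}$, whose full set of solutions in $\C_p$ is $\{a^{1/3}\theta_{j,2^m-1}:j=1,\dots,2^m-1\}$. For $m$ odd, $(-2)^m=-2^m$ and the same manipulation gives $x^{2^m+1}=a^{(2^m+1)/3}$; since the group of $(2^m+1)$-th roots of unity is closed under inversion, these solutions may be parametrized as $a^{1/3}/\theta_{j,2^m+1}$, matching the form stated in (\ref{ss}).

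Next, I would impose the geometric constraints $|x-x_k|_p=\alpha$ for $k=1,2,3$, using $x_k=\theta_{k,3}a^{1/3}$ from (\ref{nor}) together with $|a^{1/3}|_p=\alpha$. For $m$ even this collapses to $|\theta_{j,2^m-1}-\theta_{k,3}|_p=1$ for every $k$, which is literally the defining condition $(j,p)\in M_m$. For $m$ odd it becomes $|\theta_{j,2^m+1}^{-1}-\theta_{k,3}|_p=1$ for every $k$; factoring as $\theta_{j,2^m+1}^{-1}-\theta_{k,3}=-\theta_{j,2^m+1}^{-1}\theta_{k,3}(\theta_{j,2^m+1}-\theta_{k,3}^{-1})$, and using that $|\theta_{j,2^m+1}|_p=|\theta_{k,3}|_p=1$ and that $\theta_{k,3}\mapsto\theta_{k,3}^{-1}$ is a bijection on the three cube roots of unity, this reduces to $|\theta_{j,2^m+1}-\theta_{k,3}|_p=1$ for all $k$, i.e.\ $(j,p)\in M_m$. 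Combining both cases yields exactly (\ref{ss}).

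The main obstacle is entirely clerical: tracking the sign of $(-2)^m$ across the two parities and reconciling the inversion in the odd case with the definition of $M_m$ via the bijection on cube roots of unity. The argument invokes nothing beyond Lemma \ref{fo} and the multiplicativity of $|\cdot|_p$; in particular, dividing the equation $a^{(1-(-2)^m)/3}x^{(-2)^m}=x$ by $x$ is harmless since $x=0$ is excluded from the domain of $f$ and does not belong to $\mathcal{U}_\alpha$.
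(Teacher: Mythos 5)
Your proposal is correct and follows essentially the same route as the paper: reduce $f^m(x)=x$ via Lemma \ref{fo} to $(x/a^{1/3})^{(-2)^m-1}=1$, extract roots of unity, and use the second equation of (\ref{pp}) to impose the condition $(j,p)\in M_m$. Your treatment is in fact more careful than the paper's, which simply asserts the correspondence; in particular your explicit reconciliation of the inversion $\theta_{j,2^m+1}^{-1}$ with the definition of $M_m$ via the factorization and the bijection $\theta_{k,3}\mapsto\theta_{k,3}^{-1}$ fills a detail the paper leaves implicit.
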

\begin{proof} From (\ref{pp}) we get
$$\left({x\over a^{1/3}}\right)^{(-2)^m-1}=1.$$
Which has solutions (\ref{ss}). The condition $(j,p)\in M_m$ is needed to satisfy the second equation of the system (\ref{pp}).
\end{proof}
\begin{rk} We note that 
\begin{itemize}
\item
in the case $p=2$,  by part 1 of Theorem \ref{tox}, it follows that all $m$-periodic points (except fixed ones)
mentioned in (\ref{ss}) belong to $\mathcal U_\alpha$.

\item in the case $m\geq 3$ and $p\geq 3$ it is not clear to see $M_m\ne \emptyset$.  It is known that (see \cite[Corollary 2.2.]{AT})  the equation $x^k = 1$ has $g = (k, p - 1)$ different roots in $\mathbb Q_p$.
Using this fact and assuming that $a\in \mathbb Q_p$ and $a^{1\over 3}$ exists in $\mathbb Q_p$,  one can see how many periodic solutions of (\ref{pp})
exist in $\mathbb Q_p$. For example, if  $p=31$ then $t^3=1$ (with $t={x\over a^{1/3}}$) has $g=(3, 30)=3$, i.e., all possible solutions in $\mathbb Q_p$ and for $m=4$ the equation $t^{2^4-1}=1$ has $g=(15,30)=15$ distinct solutions in  $\mathbb Q_p$. Three of 15 solutions coincide with solutions of $t^3=1$, therefore remains 12 distinct solutions to satisfy the second equation of (\ref{pp}). For these solution one can check the condition $M_m\ne \emptyset$.
\end{itemize}
\end{rk}

\begin{lemma}\label{cc} If $x_*$ is a solution to (\ref{pp}) then
$$x_* \ is \ \left\{\begin{array}{ll}
{\rm attracting}, \ \  \mbox{if} \ \ p=2\\[2mm]
{\rm indifferent}, \ \ \mbox{if} \ \ p\geq 3.
\end{array}
\right.$$
\end{lemma}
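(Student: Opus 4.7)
The plan is to compute the multiplier of $f^m$ at the periodic point $x_*$ via the chain rule and show it equals $(-2)^m$, after which the statement follows immediately from the $p$-adic norm of $2$.

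First I would apply the chain rule to $f^m$ at $x_*$, giving
$$(f^m)'(x_*) = \prod_{k=0}^{m-1} f'(f^k(x_*)).$$
Next, I would invoke the identity $f'(x) = -2a/x^3 = (-2/x)\cdot f(x)$ already recorded in the excerpt, so that for each $k$,
$$f'(f^k(x_*)) = -2 \cdot \frac{f^{k+1}(x_*)}{f^k(x_*)}.$$
Substituting into the product produces a telescoping:
$$(f^m)'(x_*) = (-2)^m \prod_{k=0}^{m-1}\frac{f^{k+1}(x_*)}{f^k(x_*)} = (-2)^m \cdot \frac{f^m(x_*)}{x_*}.$$
Since $x_*$ satisfies $f^m(x_*)=x_*$ by (\ref{pp}), the ratio is $1$ and $(f^m)'(x_*)=(-2)^m$.

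Finally, I would take the $p$-adic absolute value: $|(f^m)'(x_*)|_p = |2|_p^m$. For $p=2$ this equals $2^{-m}<1$, so $x_*$ is attracting by the criterion recalled just before Lemma~\ref{fo}; for $p\geq 3$ this equals $1$, so $x_*$ is indifferent.

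There is no real obstacle here; the only subtle point is recognizing that the telescoping trick works uniformly along the cycle, which is essentially the standard fact that the multiplier of a periodic orbit is invariant along the orbit. The computation does not require the explicit description (\ref{ss}) of the periodic points at all — only that $x_*$ is a true $m$-periodic point of $f$ and is nonzero (which is automatic since $x_*\in S_\alpha(0)$).
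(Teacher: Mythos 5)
Your proof is correct. The only difference from the paper is the route to the key identity $(f^m)'(x_*)=(-2)^m\cdot f^m(x_*)/x_*$: the paper differentiates the explicit closed form $f^m(x)=a^{\frac{1}{3}(1-(-2)^m)}x^{(-2)^m}$ from Lemma~\ref{fo} directly, whereas you obtain the same identity by the chain rule and a telescoping product along the orbit. Your version is marginally more self-contained (it does not invoke Lemma~\ref{fo}) and makes visible the standard fact that the multiplier is constant along the cycle; the paper's version is a one-line computation given that the closed form is already established. Both then conclude identically from $|(f^m)'(x_*)|_p=|2|_p^m$. Your remark that every point of the orbit is nonzero (since the orbit stays in $S_\alpha(0)$) is the right thing to check before applying $f'(x)=(-2/x)f(x)$ at each iterate.
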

\begin{proof} We have
$$\left|(f^m)'(x_*)\right|_p=\left|(-2)^m\cdot a^{{1\over 3}(1-(-2)^m)}\cdot x_*^{(-2)^m-1}\right|_p
$$ $$ =\left|(-2)^m\cdot {f^m(x_*)\over x_*}\right|_p=\left\{\begin{array}{ll}
1/2^m, \ \ \mbox{if} \ \ p=2\\[2mm]
1, \ \ \mbox{if} \ \ p\geq 3
\end{array}\right.$$
This completes the proof.
\end{proof}
Consider a $m$-periodic point $x_*$. It is clear that this point is a fixed point for the function $\varphi(x)\equiv f^m(x)$.
The point $x_*$ generates $m$-cycle:
$$x_*, x^{(1)}=f(x_*), \dots, x^{(m-1)}=f^{m-1}(x_*).$$
Clearly, each element of this cycle is fixed point for function $\varphi$.
We use the following
\begin{thm}\label{at}\cite{AT}  Let $x_0$ be a fixed point of an analytic function $\varphi:U\to U$.
The following assertions hold
\begin{itemize}
\item[1.] if $x_0$ is an attractive point of $\varphi$ and if $r>0$ satisfies the inequality
$$
Q=\max_{1\leq n<\infty}\bigg|\frac{1}{n!}\frac{d^n\varphi}{dx^n}(x_0)\bigg|_pr^{n-1}<1
$$
and $U_r(x_0)\subset U$ then $U_r(x_0)\subset \mathcal A(x_0)$;\\

\item[2.] if $x_0$ is an indifferent point of $\varphi$ then it is the center of a Siegel disk. If $r$
satisfies the inequality
$$
S=\max_{2\leq n<\infty}\bigg|\frac{1}{n!}\frac{d^n\varphi}{dx^n}(x_0)\bigg|_pr^{n-1}<|\varphi'(x_0)|_p
$$
and $U_r(x_0)\subset U$ then $U_r(x_0)\subset SI(x_0)$;
\end{itemize}
\end{thm}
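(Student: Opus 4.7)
The plan is to exploit the $p$-adic Taylor expansion of the analytic function $\varphi$ about the fixed point $x_0$, namely
\[
\varphi(x) - x_0 = \sum_{n=1}^{\infty} \frac{\varphi^{(n)}(x_0)}{n!}(x-x_0)^n,
\]
together with the strong triangle inequality that distinguishes non-Archimedean analysis. Both parts reduce to a single norm estimate for $|\varphi(x)-x_0|_p$ when $x$ lies on a sphere $S_\rho(x_0)$ with $\rho<r$, and the inclusion $U_r(x_0)\subset U$ is used only to keep all iterates in the domain on which the series converges.

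For part 1, I would fix $x \in U_r(x_0)$, set $\rho=|x-x_0|_p<r$, and factor one copy of $(x-x_0)$ out of the Taylor series. Applying the ultrametric inequality termwise gives
\[
|\varphi(x)-x_0|_p \leq \rho\cdot\max_{n\geq 1}\left|\frac{\varphi^{(n)}(x_0)}{n!}\right|_p \rho^{n-1} \leq Q\rho.
\]
Since $Q<1$, an easy induction yields $|\varphi^k(x)-x_0|_p\leq Q^k\rho\to 0$, which is exactly the statement $U_r(x_0)\subset\mathcal{A}(x_0)$.

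For part 2 the goal is stronger: I must show that each sphere $S_\rho(x_0)$ with $\rho<r$ is actually invariant, not merely contracted. Splitting the Taylor series into its linear part and tail,
\[
\varphi(x)-x_0 = \varphi'(x_0)(x-x_0) + \sum_{n\geq 2}\frac{\varphi^{(n)}(x_0)}{n!}(x-x_0)^n,
\]
the linear term has norm $|\varphi'(x_0)|_p\,\rho$, while the tail is bounded above by $S\rho$. The hypothesis $S<|\varphi'(x_0)|_p$ makes these two norms strictly unequal, so the sharp form of the strong triangle inequality (property~3.1 in the introduction) forces
\[
|\varphi(x)-x_0|_p = |\varphi'(x_0)|_p\,\rho = \rho,
\]
where the last equality uses $|\varphi'(x_0)|_p=1$ for an indifferent fixed point. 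Iterating, $\varphi^k(x)\in S_\rho(x_0)$ for all $k\geq 1$, so $U_r(x_0)\subset SI(x_0)$.

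The only genuinely delicate point, and really the only place the argument goes beyond formal manipulation, is the use of the \emph{sharp} ultrametric equality in part 2: one needs both indifference ($|\varphi'(x_0)|_p=1$) and the strict inequality $S<|\varphi'(x_0)|_p$ in order to replace the usual inequality by equality and thereby obtain true sphere-invariance rather than contraction. Everything else is bookkeeping: convergence of the Taylor series on $U_r(x_0)$ is built into the analyticity assumption, and the condition $U_r(x_0)\subset U$ is exactly what lets one iterate $\varphi$ without leaving the disk.
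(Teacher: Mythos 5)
Your argument is correct. Note, however, that the paper itself offers no proof of this statement: it is quoted verbatim from the reference \cite{AT} and used as a black box, so there is no internal proof to compare against. What you have written is essentially the standard proof from the cited source: expand $\varphi$ in its $p$-adic Taylor series at $x_0$, estimate termwise with the strong triangle inequality to get $|\varphi(x)-x_0|_p\leq Q\rho$ in the attractive case (whence $\varphi$ maps $U_r(x_0)$ into itself and the iterates contract to $x_0$), and in the indifferent case separate the linear term from the tail so that the hypothesis $S<|\varphi'(x_0)|_p$ together with property 3.1 of the norm upgrades the inequality to the equality $|\varphi(x)-x_0|_p=|\varphi'(x_0)|_p\,\rho=\rho$, giving invariance of every sphere $S_\rho(x_0)$ with $\rho<r$. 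You correctly identify the one non-formal step, namely the use of the sharp ultrametric equality, and you correctly use $\rho^{n-1}\leq r^{n-1}$ to pass from the sphere of radius $\rho$ to the constants $Q$ and $S$ defined with $r$. The only point worth making explicit is that in part 1 the bound $Q\rho<\rho<r$ is what guarantees $\varphi(x)\in U_r(x_0)$, so the induction $|\varphi^k(x)-x_0|_p\leq Q^k\rho$ is legitimate; you use this implicitly and it is fine.
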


Lemma \ref{cc} suggests the following
\begin{thm} \begin{itemize}
\item If $p=2$ then for any $m = 2, 3,\dots$, the $m$-cycles are attractors and open balls with 
radius $\alpha$ are contained in the basins of attraction.
\item If $p\geq 3$ then for any $m = 2, 3,\dots$, every $m$-cycle is a center of a Siegel
disk with radius $\alpha$.
\end{itemize}
\end{thm}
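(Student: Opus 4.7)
The plan is to apply Theorem~\ref{at} to the analytic function $\varphi=f^m$ at each point of the $m$-cycle. Fix a periodic point $x_*$ as produced by Lemma~\ref{cc}; then $x_*$ is a fixed point of $\varphi$ and by Lemma~\ref{fo} one has $\varphi(x)=Cx^N$ with $N=(-2)^m$ and $C=a^{(1-N)/3}$. Because $x_*\in\mathcal U_\alpha\subset S_\alpha(0)$, we have $|x_*|_p=\alpha$ and $0\notin U_\alpha(x_*)$, so the Laurent monomial $\varphi$ is analytic on $U_\alpha(x_*)$. A direct differentiation gives
\[
\tfrac{1}{n!}\,\tfrac{d^n\varphi}{dx^n}(x_*)=C\binom{N}{n}x_*^{N-n},
\]
and since $|C|_p=\alpha^{1-N}$ and $|x_*|_p=\alpha$ this produces the clean identity
\[
\left|\tfrac{1}{n!}\,\tfrac{d^n\varphi}{dx^n}(x_*)\right|_p r^{n-1}=\left|\binom{N}{n}\right|_p\left(\frac{r}{\alpha}\right)^{n-1}.
\]
Because $N\in\mathbb{Z}$, the (generalized) binomial coefficient $\binom{N}{n}$ is an integer, hence $|\binom{N}{n}|_p\leq 1$ for every prime $p$.

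The main subtlety is that equality $|\binom{N}{n}|_p=1$ can and does occur (for instance $\binom{N}{N}=1$ when $m$ is even), so plugging $r=\alpha$ directly into the displayed expression would not yield the strict inequality required by Theorem~\ref{at}. The remedy is to work with an arbitrary $r<\alpha$ and then use $U_\alpha(x_*)=\bigcup_{r<\alpha}U_r(x_*)$. For $p=2$, Lemma~\ref{cc} gives $|\varphi'(x_*)|_p=2^{-m}$; the $n=1$ contribution is $|N|_2=2^{-m}<1$ and for $n\geq 2$ the contribution is bounded by $(r/\alpha)^{n-1}\leq r/\alpha<1$, hence $Q<1$ and Theorem~\ref{at}(1) yields $U_r(x_*)\subset\mathcal A(x_*)$. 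For $p\geq 3$, $x_*$ is indifferent with $|\varphi'(x_*)|_p=1$, and the same estimate gives $S\leq r/\alpha<1=|\varphi'(x_*)|_p$, so Theorem~\ref{at}(2) yields $U_r(x_*)\subset SI(x_*)$.

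Taking the union of these inclusions over all $r<\alpha$ proves the claim at $x_*$. The argument is applied verbatim to each of the remaining cycle points $f^i(x_*)$, $i=1,\dots,m-1$, all of which are fixed points of $\varphi$ lying on $S_\alpha(0)$ with the same derivative estimates, thereby producing the basin (resp.\ Siegel disk) of radius $\alpha$ around each of them. The only nontrivial step is the handling of the boundary radius $r=\alpha$; once it is replaced by the union-of-smaller-balls argument, the integrality of $\binom{N}{n}$ combined with $r/\alpha<1$ closes the estimate uniformly in $n$.
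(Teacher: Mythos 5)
Your proposal is correct and follows essentially the same route as the paper: apply Theorem~\ref{at} to $\varphi=f^m$, reduce the Taylor coefficients at $x_*$ to $\bigl|\binom{(-2)^m}{n}\bigr|_p(r/\alpha)^{n-1}$ using $|x_*|_p=\alpha$ and $|a|_p=\alpha^3$, and conclude for every $r<\alpha$. Your explicit handling of the boundary radius via the union $U_\alpha(x_*)=\bigcup_{r<\alpha}U_r(x_*)$ is a point the paper leaves implicit, but it is not a different method.
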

\begin{proof} Let $x_*$ be a $m$-periodic point. Recall that $|x_*|_p=\alpha$.  We use Theorem \ref{at}, by Lemma \ref{fo} we get: 
$$
Q=\max_{1\leq n<\infty}\bigg|\frac{1}{n!}\frac{d^n\varphi}{dx^n}(x_*)\bigg|_pr^{n-1}
= \max_{1\leq n<\infty}\bigg|\frac{1}{n!}a^{{1\over 3}(1-(-2)^m)}\cdot \prod_{s=0}^{n-1}\left((-2)^{m}-s\right)\cdot   x_*^{(-2)^m-n}\bigg|_pr^{n-1}
$$
$$
= \max_{1\leq n<\infty}\bigg|\frac{1}{n!}\cdot \prod_{s=0}^{n-1}\left((-2)^{m}-s\right)\cdot   {x_*\over x_*^n}\bigg|_pr^{n-1}$$
$$= \max_{1\leq n<\infty}\bigg|\frac{1}{n!}\cdot \prod_{s=0}^{n-1}\left((-2)^{m}-s\right)\bigg|_p\left({r\over \alpha}\right)^{n-1}$$
$$=\max_{1\leq n<\infty}\left({r\over \alpha}\right)^{n-1}\cdot\left\{
\begin{array}{ll}\left|{2^m\choose n}\right|_p, \ \ \mbox{if} \ \ m-{\rm even}\\[2mm]
\left|{2^m+n\choose 2^m}\right|_p, \ \ \mbox{if} \ \ m-{\rm odd}
\end{array}\right.<1
$$
If $r <\alpha$, this condition is satisfied. The second part is similar.
\end{proof} 

%%\section*{ Acknowledgements}
%%The third author was supported by the National Science Foundation, grant number NSF
%%HRD 1302873. Authors thank both referees for their helpful comments.

\end{document}